\numberwithin{equation}{section}
\newtheorem{thm}{Theorem}[section]
\newtheorem{pro}[thm]{Proposition}
\newtheorem{lem}[thm]{Lemma}
\newtheorem{cor}[thm]{Corollary}
\theoremstyle{definition}
\theoremstyle{remark}
\begin{document}

\title[Homogeneous $ANR$ spaces and Alexandroff manifolds]
{Homogeneous $ANR$-spaces  and Alexandroff manifolds}

\author{V. Valov}
\address{Department of Computer Science and Mathematics,
Nipissing University, 100 College Drive, P.O. Box 5002, North Bay,
ON, P1B 8L7, Canada} \email{veskov@nipissingu.ca}

\date{\today}
\thanks{The author was partially supported by NSERC
Grant 261914-08.}

 \keywords{absolute neighborhood retracts, cohomological dimension, cohomology groups
homogeneous compacta}

\subjclass[2000]{Primary 55M10, 55M15; Secondary 54F45, 54C55}
\begin{abstract}
We specify a result of Yokoi \cite{yo} by proving that if $G$ is an abelian group and $X$ is a homogeneous  metric $ANR$ compactum with $\dim_GX=n$ and
$\check{H}^n(X;G)\neq 0$, then $X$ is an $(n,G)$-bubble. This implies that any such space $X$ has the following properties:
$\check{H}^{n-1}(A;G)\neq 0$ for every closed separator $A$ of $X$,
and $X$
is an Alexandroff manifold with respect to the class $D^{n-2}_G$ of
all spaces of dimension $\dim_G\leq n-2$. We also prove that
if $X$ is a homogeneous metric continuum with $\check{H}^n(X;G)\neq 0$, then $\check{H}^{n-1}(C;G)\neq 0$ for
any partition $C$ of $X$ such that $\dim_GC\leq n-1$. The last provides a partial answer to a question of Kallipoliti and Papasoglu \cite{kp}.
\end{abstract}
\maketitle\markboth{}{Homogeneous $ANR$}





\section{Introduction}
In this paper we establish some properties of homogeneous metric compacta.
One of the main problems concerning  homogeneous compacta is the Bing-Borsuk \cite{bb} question whether any closet separator
of an $n$-dimensional homogeneous metric $ANR$-space is cyclic in dimension $n-1$. Yokoi's result \cite[Theorem 3.3]{yo} provides a partial answer to this question. Our first result is a clarification of \cite[Theorem 3.3]{yo}, we omit the requirement $G$ to be a principal ideal domain.

\begin{thm}
Let $X$ be a homogeneous  metric $ANR$-continuum with cohomological dimension $\dim_GX=n$ and $\check{H}^n(X;G)\neq 0$, where $G$ is an abelian group. Then $X$ is an $(n,G)$-bubble.
\end{thm}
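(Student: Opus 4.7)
The definition of $(n,G)$-bubble requires $\check{H}^n(A;G)=0$ for every proper closed subset $A\subsetneq X$; the other two conditions ($\dim_GX=n$ and $\check{H}^n(X;G)\neq 0$) are given. By the cohomology long exact sequence of the pair $(X,A)$ and the fact that $\check{H}^{n+1}(X,A;G)=0$ (a consequence of $\dim_GX=n$), the restriction $r_A:\check{H}^n(X;G)\to\check{H}^n(A;G)$ is surjective, so $\check{H}^n(A;G)=0$ is equivalent to $r_A=0$. Applying continuity of Čech cohomology, $r_A(\alpha)=0$ iff $\alpha|_U=0$ for some open $U\supset A$, and since $A$ is proper, such $U$ can be chosen with $U\subsetneq X$. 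Thus it suffices to prove that $\alpha|_U=0$ in $\check{H}^n(U;G)$ for every $\alpha\in\check{H}^n(X;G)$ and every proper open $U\subsetneq X$.

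The key tool is Effros' microtransitivity theorem, which applies because $X$ is a homogeneous compact metric space: for every $\epsilon>0$ there exists $\delta>0$ such that any two points of $X$ within distance $\delta$ are related by a self-homeomorphism $h$ of $X$ with $d(h,\id_X)<\epsilon$. Taking $\epsilon$ smaller than the ANR modulus of $X$, every such $h$ is homotopic to $\id_X$ and therefore acts as the identity on $\check{H}^*(X;G)$. Since $X$ is an ANR, and hence locally contractible, one can fix a contractible open set $V_0\subset X$ of diameter less than $\delta$; then $\check{H}^n(V_0;G)=0$, whence $\alpha|_{V_0}=0$ for every class $\alpha$. Using Effros, homogeneity, and compactness, any proper open $U\subsetneq X$ admits a finite cover $\{U_i=h_i(V_0)\}$ with each $h_i\simeq\id_X$; because $h_i^*=\id$ on cohomology, the restriction $\alpha|_{U_i}$ corresponds to $\alpha|_{V_0}=0$ under the isomorphism induced by $h_i$, yielding $\alpha|_{U_i}=0$ for every $i$.

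The main obstacle is to deduce from the local vanishings $\alpha|_{U_i}=0$ the global vanishing $\alpha|_U=0$. Shrinking $V_0$ further makes $\{U_i\}$ a good cover of $U$, so Čech cohomology $\check{H}^*(U;G)$ is computed by the nerve $N$; the class $\alpha|_U\in\check{H}^n(N;G)$ is represented by a cocycle whose vertex-level values are all zero. I would now use the properness $U\subsetneq X$ (which forces a combinatorial constraint on $N$) together with the dimension bound $\dim_GX=n$ (which limits the relevant Čech differentials) to conclude that this cocycle is a coboundary, hence $\alpha|_U=0$. Carrying out this combinatorial-cohomological step for an arbitrary abelian group $G$, in place of the PID-specific Universal Coefficient reduction of \cite[Theorem~3.3]{yo}, is the delicate point of the proof and the precise reason the PID hypothesis can be dropped.
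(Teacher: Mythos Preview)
Your reduction and the use of Effros' theorem are fine, but the final ``combinatorial--cohomological step'' is not merely delicate---it is impossible as you have set it up. Local vanishing on a good cover never forces global vanishing: any compact $ANR$ is locally contractible, so $X$ itself is covered by finitely many contractible open sets $U_i$ with $\alpha|_{U_i}=0$, yet $\alpha\neq 0$ in $\check H^n(X;G)$. Nothing in your nerve argument distinguishes a proper open $U$ from $X$; the ``combinatorial constraint'' you invoke does not exist. (Also, an $n$-cocycle on the nerve assigns values to $n$-simplices, not vertices, so ``vertex-level values are all zero'' is not a meaningful statement about $\alpha|_U$.) In short, the homogeneity and Effros input you extract---that $\alpha$ dies on every small translate of $V_0$---is true for \emph{any} class on \emph{any} compact $ANR$ and carries no information.

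The paper's argument is structurally different. It passes to a \emph{cohomological carrier} $A$ of $\alpha$: a minimal closed set with $i_A^*(\alpha)\neq 0$. The crucial lemma (Proposition~2.3) is that if $f\colon X\to X$ is homotopic to $\id_X$ and $\dim_GX\leq n$, then $A\subset f(A)$; its proof uses the identification $\check H^n(\,\cdot\,;G)\cong[\,\cdot\,,K(G,n)]$ and the fact that $\dim_GX=n$ lets one extend maps into $K(G,n)$ over all of $X$. Given this, Effros produces, for a proper carrier $A$, a homeomorphism $f\simeq\id_X$ with $f(a)\notin A$ for some $a\in A$; then $f(A)$ is a carrier of $(f^{-1})^*\alpha$ and $f(A)\subset f^{-1}(f(A))=A$, contradicting $f(a)\notin A$. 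The minimality of the carrier is what makes the homotopy invariance bite, and this is exactly the idea missing from your approach.
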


Following Yokoi \cite{yo}, a compactum $X$ is called an {\em $(n,G)$-bubble} if $\check{H}^n(X;G)\neq 0$ and $\check{H}^n(A;G)=0$ for every closed proper set $A\subset X$. This is a reformulation of the notion of an {\em $n$-bubble} introduced by Kuperberg \cite{kup} and Choi \cite{ch}, see also Karimov-Repov\v{s} \cite{kr} for the stronger notion of an {\em $\check{H}^n$-bubble.}
\begin{cor}
Let $X$ be a homogeneous  metric $ANR$-continuum with $\check{H}^n(X;G)\neq 0$ and $\dim_GX=n$. Then
\begin{itemize}
\item[(i)] $X$ is a strong $V^n_G$-continuum;
\item[(ii)] $X$ is an Alexandroff manifold with respect to the class $D^{n-2}_G$ of
all spaces of dimension $\dim_G\leq n-2$;
\item[(iii)] If $A$ is a closed separator of $X$, then $\check{H}^{n-1}(A;G)\neq 0$.
\end{itemize}
\end{cor}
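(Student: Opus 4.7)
The plan is to derive all three conclusions from the fact, guaranteed by the preceding theorem, that $X$ is an $(n,G)$-bubble; hence $\check{H}^n(F;G)=0$ for every proper closed subset $F\subsetneq X$. The core observation, which I would prove first, is (iii). Given a closed separator $A\subset X$, write $X\setminus A=U\sqcup V$ with $U,V$ nonempty disjoint open sets, and set $F_1=X\setminus U$ and $F_2=X\setminus V$. Both $F_i$ are proper closed subsets of $X$, so the bubble property yields $\check{H}^n(F_i;G)=0$. Since $F_1\cup F_2=X$ and $F_1\cap F_2=A$, the Mayer--Vietoris sequence for the closed cover $\{F_1,F_2\}$ of the compact metric space $X$ contains the exact piece
\[
\check{H}^{n-1}(F_1;G)\oplus\check{H}^{n-1}(F_2;G)\to\check{H}^{n-1}(A;G)\xrightarrow{\delta}\check{H}^n(X;G)\to 0,
\]
so $\delta$ surjects onto the nontrivial group $\check{H}^n(X;G)$, forcing $\check{H}^{n-1}(A;G)\neq 0$.

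Once (iii) is available, the remaining conclusions follow quickly. For (ii), being an Alexandroff manifold with respect to $D^{n-2}_G$ means no partition between two disjoint open sets lies in $D^{n-2}_G$. Any such partition $C$ is in particular a closed separator of $X$, so (iii) gives $\check{H}^{n-1}(C;G)\neq 0$; if $\dim_G C\leq n-2$ held, the definition of cohomological dimension would force $\check{H}^{n-1}(C;G)=0$, a contradiction. For (i), the strong $V^n_G$-continuum property requires that every partition between two disjoint closed sets with nonempty interior has nontrivial $\check{H}^{n-1}(\,\cdot\,;G)$; each such partition is a separator of $X$, so (iii) applies directly.

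The only delicate point is justifying the Mayer--Vietoris sequence above in \v{C}ech cohomology for the closed cover $\{F_1,F_2\}$; this is standard for compact metrizable spaces, where closed covers are excisive in \v{C}ech theory. With that technicality granted, the main obstacle is really the bubble property itself, which is already supplied by the preceding theorem, so the corollary amounts to a formal Mayer--Vietoris extraction together with the definitional translation of (i) and (ii) into statements about separators.
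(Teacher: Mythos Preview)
Your argument for (iii) is correct and is essentially the paper's Lemma~2.1 specialized to the carrier $A=X$ (which is what Theorem~1.1 gives). The paper proves (iii) by exactly this Mayer--Vietoris computation.

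The gap is in (i) and (ii): you have misread both definitions. A strong $V^n_G$-continuum is \emph{not} defined by the condition ``$\check{H}^{n-1}(P;G)\neq 0$ for every partition $P$.'' The definition (see the Introduction) asks, for each pair of disjoint open sets $U_1,U_2$, for an open cover $\omega$ of $X_0=X\setminus(U_1\cup U_2)$ and an element $\mathrm{e}\in\check{H}^{n-1}(X_0;G)$ such that for every partition $P$ and every $\omega$-map $g\colon P\to Y$ one has $0\neq i_P^*(\mathrm e)\in g^*\big(\check{H}^{n-1}(Y;G)\big)$. Producing a single nonzero class in $\check{H}^{n-1}(P;G)$, varying with $P$, does not supply a \emph{uniform} $\omega$ and $\mathrm e$, nor does it show the class lies in the image of $g^*$ for an arbitrary $\omega$-map. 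Likewise, the Alexandroff manifold property with respect to $D^{n-2}_G$ does not say ``no partition lies in $D^{n-2}_G$''; it says that for a suitable cover $\omega$ no partition admits an $\omega$-map onto a space in $D^{n-2}_G$. Knowing $\check{H}^{n-1}(C;G)\neq 0$ rules out $\dim_G C\le n-2$, but it does not by itself rule out $\omega$-maps from $C$ onto spaces of $\dim_G\le n-2$.

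The paper obtains (i) from Proposition~2.5, which uses the nerve construction of Proposition~2.4 to manufacture the required cover $\omega$ and element $\mathrm e$ for any carrier (and $X$ is a carrier by Theorem~1.1). Item (ii) then follows from the general fact, noted in the Introduction, that every $V^n_G$-continuum is an Alexandroff manifold with respect to $D^{n-2}_G$. To repair your argument you would need to replace the one-line reductions of (i) and (ii) with this nerve/$\omega$-map machinery; (iii) alone is too weak to carry them.
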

Item (ii) from Corollary 1.2 was proved in \cite{ktv} under the additional requirement that $G$ is a principal ideal domain.
Here, $\check{H}^n(X;G)$ denotes the reduced $n$-th \v{C}ech cohomology group of $X$ with coefficients from $G$. We say that a set $A\subset X$
is massive if $A$ has a non-empty interior in $X$.

Recall that
a space $X$ is an {\em Alexandroff manifold with respect to a class $\mathcal C$}, see \cite{kktv} and \cite{tv},
 if for every two disjoint, closed massive sets $X_0, X_1\subset X$
there exists an open cover
$\omega$ of $X$ such that there is no partition $P$ in $X$ between
$X_0$ and $X_1$ admitting an $\omega$-map onto a space $Y\in\mathcal C$.
This definition is inspired by the Alexandroff's notion of $V^n$-continua \cite{ps} which is obtained when $\mathcal C$ is
the class of all compacta whose covering dimension is $\leq n-2$.

A compactum $X$ is said to be a {\em $V^n_G$-continuum} \cite{ss}
if for every two open, disjoint subsets $U_1$, $U_2$ of $X$ there exists an open cover $\omega$ of $X_0=X\setminus (U_1\cup U_2)$ such that any
partition $P$ in $X$ between $U_1$ and $U_2$ does not admit an $\omega$-map $g$ of $P$ onto a space $Y$ with
$g^*\colon\check{H}^{n-1}(Y;G)\to\check{H}^{n-1}(P;G)$ being trivial. If, in addition, there exists also an element $\mathrm{e}\in\check{H}^{n-1}(X_0;G)$ such that for any partition $P$ between
$U_1$ and $U_2$ and any $\omega$-map $g$ of $P$ onto a space $Y$ we have $0\neq i^*_P(\mathrm{e})\in g^*\big(\check{H}^{n-1}(Y;G)\big)$, where $i_P$ is the embedding $P\hookrightarrow X_0$, $X$ is called {\em a strong $V^n_G$-continuum}. For example, every $(n,G)$-bubble is a strong $V^n_G$-continuum, see \cite{ktv}.

It follows directly from the above definitions that
$V^n_G$-continua are Alexandroff manifolds with respect to the class  $D^{n-2}_G$ of
all spaces of dimension $\dim_G\leq n-2$. The converse is not true, for example the Menger $n$-dimensional compactum is a $V^n$-continuum, but it is not a $V^n_G$-continuum for any group $G$, see \cite{ktv}.

Homogeneous metric compacta (not necessary $ANR$) are also interesting class of spaces. Krupski \cite{kru} has shown that any
such an $n$-dimensional space is a Cantor $n$-manifold. One of the ingredients of Krupski' proof is
the classical result established by Hurewicz-Menger \cite{hm} and Tumarkin \cite{tu} that any $n$-dimensional compactum contains an
$n$-dimensional Cantor $n$-manifold. Kuz'minov \cite{ku} provided a cohomological counterpart of this fact about $V^n$-continua  (see \cite{kktv} for more general results). Concerning  $V^n_G$-continua we have the following statement which provides a positive answer of Question 4.3 from \cite{ktv}:
\begin{thm}
Any compactum $X$ with $\check{H}^n(X;G)\neq 0$ contains a strong $V^n_G$-continuum.
\end{thm}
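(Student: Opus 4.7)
The plan is to reduce the theorem to the existence of an $(n,G)$-bubble inside $X$. As recorded in the introduction, every $(n,G)$-bubble is a strong $V^n_G$-continuum, so this reduction finishes the proof once the bubble is produced.

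For the production, I would run a Zorn argument anchored at a fixed nonzero cohomology class. Pick $\alpha\in\check{H}^n(X;G)\setminus\{0\}$ and set
\[
\mathcal{F}_\alpha=\{Y\subseteq X:Y\text{ closed and }\alpha|_Y\neq 0\text{ in }\check{H}^n(Y;G)\},
\]
partially ordered by reverse inclusion. For a descending chain $(Y_\beta)$ in $\mathcal{F}_\alpha$, continuity of \v{C}ech cohomology on descending intersections of compacta identifies
\[
\check{H}^n\bigl(\bigcap\nolimits_\beta Y_\beta;G\bigr)=\varinjlim\nolimits_\beta\check{H}^n(Y_\beta;G)
\]
with bonding maps the restrictions. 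The compatible family $(\alpha|_{Y_\beta})$ of nonzero classes is sent to a nonzero element in the direct limit, so $\bigcap_\beta Y_\beta\in\mathcal{F}_\alpha$ and Zorn produces a minimal element $Y_0\in\mathcal{F}_\alpha$. Minimality at once forces $\alpha|_A=0$ in $\check{H}^n(A;G)$ for every proper closed $A\subsetneq Y_0$.

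The remaining step is the heart of the proof: upgrading the vanishing of the restriction $\alpha|_A$ to the intrinsic vanishing $\check{H}^n(A;G)=0$, so that $Y_0$ is an $(n,G)$-bubble. If this failed for some proper closed $A\subsetneq Y_0$, a nonzero $\beta\in\check{H}^n(A;G)$ would let the Zorn construction be rerun inside $A$ relative to $\beta$, producing a strictly smaller closed subset $Y_1\subsetneq Y_0$ with $\check{H}^n(Y_1;G)\neq 0$. Iterating transfinitely and using that in a second countable compactum any strictly decreasing transfinite sequence of closed subsets is necessarily countable, the process must terminate, and it can terminate only at an $(n,G)$-bubble.

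The main obstacle lies in the limit-ordinal step of this iteration: the cohomology classes anchoring successive applications of Zorn need not form a coherent thread through the direct limit of the $\check{H}^n(Y_\alpha;G)$'s, so intrinsic nonvanishing may fail on the intersection. The cleanest remedy is to replace $\mathcal{F}_\alpha$ by the richer partial order on pairs $(Y,\alpha)$ with $Y\subseteq X$ closed and $\alpha\in\check{H}^n(Y;G)\setminus\{0\}$, ordered by $(Y',\alpha')\preceq(Y,\alpha)$ iff $Y'\subseteq Y$ and $\alpha|_{Y'}=\alpha'$. Chains admit lower bounds by exactly the same continuity argument, and the bubble property of a minimal pair is extracted by combining pair-minimality with a neighborhood exhaustion $A=\bigcap_i\overline{V_i}$ of any hypothetical proper closed $A\subsetneq Y_0$ with $\check{H}^n(A;G)\neq 0$, together with one more application of \v{C}ech continuity to $A$.
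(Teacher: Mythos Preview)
Your first Zorn argument correctly produces what the paper calls a \emph{cohomological carrier} of $\alpha$: a closed $Y_0$ on which $\alpha$ restricts nontrivially but such that $\alpha|_{Y'}=0$ for every proper closed $Y'\subsetneq Y_0$. The trouble is the attempted upgrade from carrier to $(n,G)$-bubble. Minimality in your pair order records precisely that $\alpha_0|_{Y'}=0$ for proper closed $Y'\subsetneq Y_0$; it controls only the single class $\alpha_0$, not all of $\check{H}^n(Y';G)$. In your last paragraph, lifting a hypothetical nonzero $\beta\in\check{H}^n(A;G)$ through a neighborhood exhaustion yields some nonzero $\tilde\beta\in\check{H}^n(\overline{V_i};G)$ with $\overline{V_i}\subsetneq Y_0$, but the pair $(\overline{V_i},\tilde\beta)$ is \emph{incomparable} to $(Y_0,\alpha_0)$, since $\alpha_0|_{\overline{V_i}}=0\neq\tilde\beta$; no contradiction with pair-minimality follows. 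The transfinite iteration has the same defect at limit stages, as you yourself observe, and the second-countability bound on the length of strictly decreasing chains does not help: the process can simply stall at a limit intersection with trivial $\check{H}^n$ before any bubble is reached. Indeed, without further hypotheses (e.g.\ finite generation of $\check{H}^n$, as in Kuperberg's theorem cited in the introduction) it is not clear that $X$ need contain an $(n,G)$-bubble at all.

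The paper sidesteps this by showing directly (Proposition~2.5, via the Mayer--Vietoris computation of Proposition~2.4) that every carrier is already a strong $V^n_G$-continuum. So the object your first Zorn step produces is already what is wanted; the detour through bubbles is both unnecessary and, as far as one can see, unavailable in this generality.
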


Theorem 1.3 could be also compared with the cohomological version of Kuperberg's result \cite[Theorem 5.5]{kup} that any $n$-dimensional $n$-cyclic metric compactum $X$ for which $\check{H}^n(X;\mathbb Z)$ is finitely generated (in particular, any $n$-dimensional $n$-cyclic $ANR$) contains an $n$-bubble.

 The condition $\check{H}^n(X;G)\neq 0$ in the above theorem is essential. For example, let $X$ be the square $\mathbb I^2$ and suppose $X$ contains
 a (strong) $V^2_{\mathbb Z}$-continuum $K$, where $\mathbb Z$ is the group of all integers. Then $\dim K=2$, so $K$ contains a non-empty interior $U$ in $X$. Now, take a segment $\mathbb I$ joining two opposite sides of $\mathbb I^2$ and intersecting $U$. Obviously $\mathbb I\cap K$ is a partition of $K$. Since $\check{H}^{n-1}(P;G)\neq 0$ for every partition $P$ of a $V^n_G$-continuum,
 $\check{H}^1(\mathbb I\cap K;\mathbb Z)\neq 0$. On the other hand, because $\mathbb I\cap K$ is an one-dimensional subset of $\mathbb I$, $\check{H}^1(\mathbb I\cap K;\mathbb Z)=0$.

Kuperberg \cite{kup} asked whether any $n$-dimensional metric compactum contains an $(n-1)$-bubble. This question is still open, but the following corollary provides a result in this direction.
\begin{cor}
Any compactum $X$ with $\dim_GX=n$ contains a strong $V^{n-1}_G$-continuum.
\end{cor}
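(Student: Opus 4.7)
The plan is to reduce to Theorem 1.3 applied with $n-1$ in place of $n$: if I can exhibit a closed subspace $F\subseteq X$ with $\check{H}^{n-1}(F;G)\neq 0$, then Theorem 1.3 produces a strong $V^{n-1}_G$-continuum inside $F\subseteq X$, and Corollary 1.4 follows. So the whole question reduces to locating such an $F$.

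I would split into two cases according to whether $\check{H}^n(X;G)$ vanishes. Suppose first that $\check{H}^n(X;G)\neq 0$. Then Theorem 1.3 applied to $X$ itself yields a strong $V^n_G$-continuum $K\subseteq X$. Since $K$ is a $V^n_G$-continuum it must be nondegenerate, so I can pick two disjoint nonempty open subsets $U_1,U_2\subset K$ and any partition $P$ in $K$ between them. The defining property of a $V^n_G$-continuum, applied to the identity $\omega$-map $\id_P\colon P\to P$, forces $\check{H}^{n-1}(P;G)\neq 0$, and I set $F=P$.

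Suppose instead that $\check{H}^n(X;G)=0$. The hypothesis $\dim_GX=n$ supplies a closed subset $A\subset X$ with $\check{H}^n(X,A;G)\neq 0$. The segment
$$\check{H}^{n-1}(A;G)\longrightarrow \check{H}^n(X,A;G)\longrightarrow \check{H}^n(X;G)=0$$
of the long exact sequence of the pair $(X,A)$ in reduced \v{C}ech cohomology shows that the first arrow is surjective, whence $\check{H}^{n-1}(A;G)\neq 0$, and I take $F=A$.

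The potentially delicate step is the first case: one must verify that a strong $V^n_G$-continuum $K$ is nondegenerate (if $K$ were a singleton, the definition applied with $U_1=U_2=\emptyset$ and $P=K$ would fail via the identity map) and that the $V^n_G$-property forces $\check{H}^{n-1}(P;G)\neq 0$ on every partition $P$ between two nonempty disjoint open sets. Both facts fall out of the definition by testing it against $g=\id_P$, so no real obstruction arises. The second case is a direct extraction of one dimension of cohomology from the cohomological-dimension hypothesis via the long exact sequence.
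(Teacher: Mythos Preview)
Your argument is correct and follows the same reduction as the paper: find a closed $F\subseteq X$ with $\check{H}^{n-1}(F;G)\neq 0$ and then apply Theorem~1.3 at level $n-1$. The paper's own proof is a single sentence that simply cites Kuz'minov for the existence of such an $F$ whenever $\dim_G X=n$, whereas you supply a self-contained argument via the case split on whether $\check{H}^n(X;G)$ vanishes. Your Case~2 (the long exact sequence of the pair) is exactly the standard argument behind the cited fact. Your Case~1 works, but is slightly roundabout: once $\check{H}^n(X;G)\neq 0$, Lemma~2.1 already hands you a carrier $A$ (necessarily nondegenerate, since $\check{H}^n(A;G)\neq 0$ and $n\geq 1$) with $\check{H}^{n-1}(P;G)\neq 0$ for every closed partition $P$ of $A$; there is no need to invoke Theorem~1.3 and then unpack the $V^n_G$ property via $g=\id_P$. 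Either way, nothing is missing.
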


For finite-dimensional metric compacta and $V^n_G$-continuua Theorem 1.3 and Corollary 1.4 were established in \cite{ss}.

\begin{pro}
Let $X$ be a homogeneous metric continuum with $\check{H}^n(X;G)\neq 0$. Then for
any partition $C$ of $X$ there exists an open cover $\omega$ of $X$ such that $C$ does not admit any  $\omega$-map $g\colon C\to Y$
onto a space of dimension $\dim_GY\leq n-1$ with $g^*:\check{H}^{n-1}(Y;G)\to\check{H}^{n-1}(C;G)$  being a trivial homomorphism.
\end{pro}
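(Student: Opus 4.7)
The plan is to reduce the conclusion to the strong $V^n_G$-continuum property guaranteed by Theorem 1.3 and then to transfer it from a suitable sub-continuum to $C$. Write $X\setminus C=U_1\cup U_2$ with $U_1,U_2$ disjoint non-empty open sets.

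First, by Theorem 1.3 applied to $X$, there is a strong $V^n_G$-continuum $K_0\subset X$. The crucial geometric step, which I expect to require the most care, is to use the homogeneity of $X$ to produce a strong $V^n_G$-continuum $K\subset X$ with $K\cap U_1\neq\emptyset$ and $K\cap U_2\neq\emptyset$. The idea is to invoke Effros' theorem on the micro-transitivity of $\mathrm{Homeo}(X)$: the orbit of $K_0$ under the identity component of $\mathrm{Homeo}(X)$ is a connected subset of $2^X$ containing translates meeting $U_1$ and translates meeting $U_2$, and since the subsets $\{K\in 2^X:K\cap U_i=\emptyset\}$ are closed, connectedness forces the existence of a translate meeting both. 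Equivalently, one can chain $\varepsilon$-homeomorphisms from Effros' theorem to push a chosen point of $K_0\cap U_1$ into $U_2$ while keeping some other point of $K_0\cap U_1$ inside $U_1$.

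Once such a $K$ is in hand, $V_i:=U_i\cap K$ are non-empty disjoint open subsets of $K$, and $K\setminus(V_1\cup V_2)=K\cap C$. The strong $V^n_G$-property of $K$ applied to $V_1,V_2$ provides an open cover $\omega_0$ of $K\cap C$ and a non-zero class $e\in\check H^{n-1}(K\cap C;G)$ such that, for any partition $P$ of $K$ between $V_1$ and $V_2$ and any $\omega_0$-map $f\colon P\to Z$ onto $Z$, one has $i_P^*(e)\in f^*(\check H^{n-1}(Z;G))$. Specializing to $P=K\cap C$ yields that every $\omega_0$-map $f\colon K\cap C\to Z$ onto $Z$ satisfies $e\in f^*(\check H^{n-1}(Z;G))$. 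Extend $\omega_0$ to an open cover $\omega$ of $X$ by choosing, for each $V\in\omega_0$, an open $\widetilde{V}\subset X$ with $\widetilde{V}\cap(K\cap C)=V$ and setting $\omega=\{\widetilde{V}:V\in\omega_0\}\cup\{X\setminus(K\cap C)\}$; then for every $\omega$-map $g\colon C\to Y$, the restriction $h:=g|_{K\cap C}\colon K\cap C\to Y'$, with $Y':=g(K\cap C)$, is an $\omega_0$-map onto $Y'$.

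Finally, take any $\omega$-map $g\colon C\to Y$ onto $Y$ with $\dim_G Y\leq n-1$. By the previous step there is $\eta'\in\check H^{n-1}(Y';G)$ with $h^*(\eta')=e$. The hypothesis $\dim_G Y\leq n-1$ forces $\check H^n(Y,Y';G)=0$, so the cohomology long exact sequence of the pair $(Y,Y')$ shows that $j^*\colon\check H^{n-1}(Y;G)\to\check H^{n-1}(Y';G)$ is surjective. Choose $\eta\in\check H^{n-1}(Y;G)$ with $j^*(\eta)=\eta'$; the commutative square $i^*\circ g^*=h^*\circ j^*$, with $i\colon K\cap C\hookrightarrow C$, yields $i^*(g^*(\eta))=h^*(\eta')=e\neq 0$, whence $g^*(\eta)\neq 0$ and $g^*$ is non-trivial, as desired.
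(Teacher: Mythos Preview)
Your cohomological endgame is sound and matches the paper's mechanism: once a strong $V^n_G$-continuum $K$ with $K\cap U_1\neq\varnothing\neq K\cap U_2$ is available, the surjectivity of $\check H^{n-1}(Y;G)\to\check H^{n-1}(Y';G)$ coming from $\dim_GY\le n-1$ is exactly the tool the paper uses (there in the form $i^*_{g(B)}$ surjective) to conclude that $g^*$ cannot be trivial.

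The gap is in the placement of $K$. Your connectedness argument does not go through as stated: the two closed sets $\mathcal B_i=\{F\in 2^X:F\cap U_i=\varnothing\}$ are not disjoint --- their intersection is $\{F:F\subset C\}$ --- so a connected orbit that meets the complement of $\mathcal B_1$ and the complement of $\mathcal B_2$ can perfectly well sit inside $\mathcal B_1\cup\mathcal B_2$, passing through $\mathcal B_1\cap\mathcal B_2$. In other words, you have not excluded that some homeomorphic copy of $K_0$ lies entirely in $C$, and without this your ``connectedness forces'' step fails. (There is a secondary issue: it is not immediate that the identity component of $\mathrm{Homeo}(X)$ acts transitively, so even the premise that the connected orbit contains translates meeting each $U_i$ needs justification.) Your alternative (b) does not help either: composing a long chain of $\varepsilon$-homeomorphisms gives no control on where the ``other'' point of $K_0$ ends up.

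This is precisely why the paper argues by contradiction. Assuming that \emph{every} cover $\omega$ admits a bad $\omega$-map $g_\omega$, the paper first reduces to $C$ with empty interior, and then notes that the restriction $g_\omega|_A$ is still bad for every closed $A\subset C$ and every $\omega$. With this in hand, the inclusion $K\subset C$ is ruled out directly: it would make every partition $P$ of $K$ admit, for every cover, a map with trivial $(n-1)$-cohomology pullback, contradicting the strong $V^n_G$-property. Only then does the paper invoke Effros (as in Krupski) to nudge $K$ across $C$. If you want to keep a direct argument, you must supply an independent reason why no translate of $K_0$ can be contained in $C$; absent that, the contradiction route is the clean fix.
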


Let us note that the $n$-dimensional universal Menger compactum $\mu^n$, which is a homogeneous continuum cyclic in dimension $n$,  contains a separator $C$ such that $\dim C=n$ and $\check{H}^{n-1}(C;G)=0$, see \cite[Corollary 2.6]{ktv}. Therefore, the restriction $\dim_GY\leq n-1$ in Proposition 1.5 and the condition $\dim_GP\leq n-1$ in next corollary are essential.

\begin{cor}
Let $X$ be a homogeneous metric continuum such that $\check{H}^n(X;G)\neq 0$. Then
$\check{H}^{n-1}(P;G)\neq 0$ for every  partition $P$ of $X$ with $\dim_GP\leq n-1$.
\end{cor}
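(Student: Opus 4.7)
The plan is to deduce Corollary 1.6 directly from Proposition 1.5 by using $P$ itself as the target space and the identity map as the test $\omega$-map. Let $P$ be a partition of $X$ with $\dim_G P \leq n-1$. Apply Proposition 1.5 to the partition $C := P$ to obtain an open cover $\omega$ of $X$ with the property that $P$ admits no $\omega$-map onto a space of cohomological dimension $\leq n-1$ whose induced homomorphism on $\check{H}^{n-1}(\cdot\,;G)$ is trivial.

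Next, I would consider the identity map $\id_P \colon P \to P$. For any open cover $\omega$ of $X$ the trace $\omega|_P = \{U \cap P : U \in \omega\}$ is an open cover of $P$, and since $\id_P^{-1}(p) = \{p\}$ is contained in any element of $\omega|_P$ that contains $p$, the identity is automatically an $\omega$-map. Moreover, by hypothesis the target $P$ satisfies $\dim_G P \leq n-1$, so $\id_P$ meets all the structural requirements of the map $g$ excluded by Proposition 1.5.

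Consequently, the induced homomorphism $(\id_P)^* \colon \check{H}^{n-1}(P;G) \to \check{H}^{n-1}(P;G)$ must fail to be trivial. Since this homomorphism is the identity on $\check{H}^{n-1}(P;G)$, it is non-trivial if and only if $\check{H}^{n-1}(P;G) \neq 0$, which yields the conclusion. The argument is essentially a tautology once Proposition 1.5 is in hand; the only point worth checking carefully is that the identity qualifies as an $\omega$-map in the sense used in Proposition 1.5, and I do not anticipate any obstacle beyond this bookkeeping.
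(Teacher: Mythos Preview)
Your argument is correct and is exactly what the paper intends: its entire proof of Corollary~1.6 is the sentence ``It follows directly from Proposition~1.5,'' and you have simply unpacked this by taking $g=\id_P$ as the test $\omega$-map. The only detail you were cautious about---that the identity is an $\omega$-map for any cover $\omega$---is immediate since point-preimages are singletons, so there is nothing further to verify.
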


Kallipoliti and Papasoglu \cite{kp} have shown that every 2-dimensional locally connected, simply connected homogeneous metric continuum can not be separated by an arc, and asked if the simple connectedness can be dropped from this result.
Corollary 1.6 provides a partial answer to the Kallipoliti-Papasoglu question.

\section{Cohomological carriers}

In this section we consider cohomological carriers of non-trivial elements of $\check{H}^n(X;G)$ and establish some properties of them. We fix an abelian group $G$, an integer $n$ and a metric compactum $X$ with $\check{H}^n(X;G)\neq 0$. A closed non-empty set $A\subset X$ is said to a {\em a cohomological carrier}  (shortly, a carrier) of a non-zero element $\alpha\in\check{H}^n(X;G)$ if $i^*_A(\alpha)\neq 0$ and $i^*_B(\alpha)=0$ for every proper closed subset $B\subset A$, where $i_A$ denotes the inclusion map $A\hookrightarrow X$.

\begin{lem}
For every non-zero element $\alpha\in\check{H}^n(X;G)$ there exists a carrier. Moreover, if $A$ is a carrier of $\alpha$, then  $\check{H}^{n-1}(P;G)\neq 0$ for any closed partition $P$ of $A$.
\end{lem}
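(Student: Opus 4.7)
The plan is to prove the existence of a carrier by a Zorn's lemma argument, and then deduce the partition statement from the Mayer--Vietoris sequence for \v{C}ech cohomology. For existence, I would consider the family $\mathcal{F}$ of all closed subsets $B\subseteq X$ with $i_B^*(\alpha)\neq 0$, partially ordered by reverse inclusion. This family contains $X$. Given a chain $\{B_\lambda\}$ in $\mathcal{F}$, one sets $B=\bigcap_\lambda B_\lambda$ and invokes the continuity of \v{C}ech cohomology for descending nets of compacta, $\check{H}^n(B;G)=\varinjlim \check{H}^n(B_\lambda;G)$ with the direct system formed by the restriction maps, to argue that $i_B^*(\alpha)$ cannot vanish: otherwise each $i_{B_\lambda}^*(\alpha)$ would already become zero in some $\check{H}^n(B_\mu;G)$ with $B_\mu\subseteq B_\lambda$, contradicting $B_\mu\in\mathcal{F}$. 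Zorn then produces a minimal element, which is a carrier by definition.

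For the second assertion, given a carrier $A$ and a closed partition $P$ of $A$ with $A\setminus P=U\sqcup V$ ($U,V$ non-empty, disjoint, and open in $A$), I would set $A_1=A\setminus V$ and $A_2=A\setminus U$: these are proper closed subsets of $A$ satisfying $A_1\cup A_2=A$ and $A_1\cap A_2=P$. Since $A$ is a carrier, the restrictions $i_{A_j}^*(\alpha)$ vanish in $\check{H}^n(A_j;G)$ for $j=1,2$. The Mayer--Vietoris sequence for the closed cover $\{A_1,A_2\}$ of $A$,
$$\cdots\to\check{H}^{n-1}(P;G)\xrightarrow{\delta}\check{H}^n(A;G)\xrightarrow{(i_{A_1}^*,\,i_{A_2}^*)}\check{H}^n(A_1;G)\oplus\check{H}^n(A_2;G)\to\cdots,$$
then forces the non-zero class $i_A^*(\alpha)\in\check{H}^n(A;G)$ to lie in the image of $\delta$, whence $\check{H}^{n-1}(P;G)\neq 0$.

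The main technical points to justify carefully are the continuity property $\check{H}^n\bigl(\bigcap B_\lambda;G\bigr)=\varinjlim\check{H}^n(B_\lambda;G)$ under decreasing nets of closed subsets of the compactum $X$, needed to verify that $\mathcal{F}$ is closed under chains, and the availability of the \v{C}ech Mayer--Vietoris sequence for closed covers of compact Hausdorff spaces; both are classical facts for \v{C}ech theory on compacta but merit an explicit reference. Once these are in hand, everything else is a short formal deduction and no further obstacle is expected.
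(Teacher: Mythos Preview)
Your proposal is correct and follows essentially the same approach as the paper: Zorn's lemma plus continuity of \v{C}ech cohomology for the existence of a carrier, and the Mayer--Vietoris sequence for the closed cover $A=A_1\cup A_2$, $P=A_1\cap A_2$ to show $i_A^*(\alpha)$ lies in the image of the connecting map. The paper is terser on the first part (it merely cites Zorn and continuity) but your expanded argument is exactly what is intended; your Mayer--Vietoris step matches the paper's almost verbatim.
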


\begin{proof}
The first part of Lemma 2.1 follows from Zorn's lemma and the continuity of \v{C}ech cohomology. For the second part, suppose $A$ is a carrier of $\alpha$ and $P$ a partition of $A$. Then there exist two closed proper subsets $A_1$ and $A_2$ of $A$ such that $A=A_1\cup A_2$ and $P=A_1\cap A_2$. Consider the
the Mayer-Vietoris exact sequence
{ $$
\begin{CD}
\check{H}^{n-1}(P;G)\rightarrow\check{H}^{n}(A;G)\rightarrow\check{H}^{n}(A_1;G)\oplus\check{H}^{n}(A_2;G).
\end{CD}
$$}\\
For every $i=1,2$ let $\partial_i\colon\check{H}^{n}(A;G)\to\check{H}^{n}(A_i;G)$ be generated by the inclusion
$A_i\hookrightarrow A$. Denote also by $\triangle$ and $\varphi$, respectively, the left and right homomorphism of the above sequence.
Since each $A_i$ is a proper subset of $A$ we have $\varphi(\beta)=(\partial_1(\beta),\partial_2(\beta))=0$, where
$\beta=i^*_A(\alpha)$. So, there exists $\gamma\in\check{H}^{n-1}(P;G)$ with $\triangle(\gamma)=\beta$. Because $\beta$ is a non-trivial element of $\check{H}^{n}(A;G)$, so is $\gamma$. Hence, $\check{H}^{n-1}(P;G)\neq 0$.
\end{proof}

Everywhere below, if $B\subset A$, then $i_{A,B}$ denotes the inclusion $B\hookrightarrow A$. The next lemma is an analogue of Lemma 4 from \cite{ch}.
\begin{lem}
Let $A\subset X$ be a carrier of a non-trivial element $\alpha\in\check{H}^n(X;G)$ and $B$ a closed subset of $X$. Then $A\subset B$ if and only if $\mathrm{Ker}(j_B^*)\subset\mathrm{Ker}(j_A^*)$, where
$j_A=i_{A\cup B,A}$ and $j_B=i_{A\cup B,B}$ are the corresponding inclusions.
\end{lem}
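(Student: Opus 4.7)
The plan is to treat the two implications separately. The forward direction is essentially trivial, while the converse uses the Mayer-Vietoris sequence combined with the carrier property of $A$.

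For the direction $A\subset B\Rightarrow\mathrm{Ker}(j_B^*)\subset\mathrm{Ker}(j_A^*)$: Observe that $A\subset B$ forces $A\cup B=B$, so $j_B$ becomes the identity inclusion of $B$, and consequently $j_B^*$ is the identity on $\check{H}^n(B;G)$. Thus $\mathrm{Ker}(j_B^*)=0$, which is contained in any subgroup.

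For the converse I would argue by contraposition. Suppose $A\not\subset B$; the goal is to produce $\gamma\in\mathrm{Ker}(j_B^*)\setminus\mathrm{Ker}(j_A^*)$. Since $A\not\subset B$, the intersection $A\cap B$ is a proper closed subset of the carrier $A$, so by the definition of a carrier,
$$
i^*_{A\cap B}(\alpha)=0\quad\text{while}\quad i^*_A(\alpha)\neq 0.
$$
Consider the Mayer-Vietoris sequence for the decomposition $A\cup B$:
$$
\check{H}^n(A\cup B;G)\xrightarrow{\phi}\check{H}^n(A;G)\oplus\check{H}^n(B;G)\xrightarrow{\psi}\check{H}^n(A\cap B;G),
$$
where $\phi(\gamma)=(j_A^*(\gamma),j_B^*(\gamma))$ and $\psi(x,y)=i^*_{A,A\cap B}(x)-i^*_{B,A\cap B}(y)$. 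The element $(i^*_A(\alpha),0)$ lies in $\mathrm{Ker}(\psi)$ precisely because $i^*_{A,A\cap B}(i^*_A(\alpha))=i^*_{A\cap B}(\alpha)=0$. By exactness there exists $\gamma\in\check{H}^n(A\cup B;G)$ with $\phi(\gamma)=(i^*_A(\alpha),0)$, i.e.\ $j_A^*(\gamma)=i^*_A(\alpha)\neq 0$ and $j_B^*(\gamma)=0$. Such a $\gamma$ lies in $\mathrm{Ker}(j_B^*)$ but not in $\mathrm{Ker}(j_A^*)$, contradicting the hypothesis.

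The main technical point is the legitimacy of invoking Mayer-Vietoris in \v{C}ech cohomology for the pair of closed subsets $A,B$ of the compactum $X$; this is a standard fact (the same one used in Lemma 2.1), so the real content of the argument is the observation that the carrier property pins down $(i^*_A(\alpha),0)$ inside $\mathrm{Ker}(\psi)$, which is exactly what lets exactness produce the witness $\gamma$.
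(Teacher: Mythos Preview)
Your proof is correct and follows essentially the same route as the paper's: both arguments use the Mayer--Vietoris sequence for $A\cup B=A\cup B$, observe that the carrier property makes $(i_A^*(\alpha),0)$ lie in the kernel of the difference map, and then pull back via exactness to obtain an element in $\mathrm{Ker}(j_B^*)\setminus\mathrm{Ker}(j_A^*)$. The only cosmetic difference is that the paper phrases the converse as a proof by contradiction while you frame it as a contraposition, and for the forward direction the paper leaves it at ``obviously'' whereas you spell out that $A\cup B=B$ forces $j_B^*=\mathrm{id}$.
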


\begin{proof}
Obviously $A\subset B$ implies $\mathrm{Ker}(j_B^*)\subset\mathrm{Ker}(j_A^*)$. Suppose that $\mathrm{Ker}(j_B^*)\subset\mathrm{Ker}(j_A^*)$, but $B$ does not contain $A$. Then $A\cap B$ is a proper closed subset of $A$ (possibly empty). The left homomorphism in the Mayer-Vietoris exact sequence
{ $$
\begin{CD}
\check{H}^{n}(A\cup B;G)\rightarrow\check{H}^{n}(A;G)\oplus\check{H}^{n}(B;G)\rightarrow\check{H}^{n}(A\cap B;G).
\end{CD}
$$}\\
 is defined by $(j_A^*,j_B^*)$, while the right one $i^*$
assigns to each $(\beta_1,\beta_2)\in\check{H}^{n}(A;G)\oplus\check{H}^{n}(B;G)$ the difference $i_{A,A\cap B}^*(\beta_1)-i_{B,A\cap B}^*(\beta_2)$. Since $A\cap B$ is a proper subset of $A$, $i_{A\cap B}^*(\alpha)=0$. Then $i^*((i^*_A(\alpha),0))=0$. Consequently, there exists
 $\beta\in\check{H}^{n}(A\cup B;G)$ with $(j_A^*(\beta),j_B^*(\beta))=(i^*_A(\alpha),0)$. So, $\beta\in\mathrm{Ker}(j_B^*)$ and, according to our assumption, $\beta\in\mathrm{Ker}(j_A^*)$. The last relation contradicts $i^*_A(\alpha)\neq 0$. Therefore, $A\subset B$.
\end{proof}

The next proposition is actually Theorem 5 from \cite{ch}. We provide a different proof of that theorem.

\begin{pro}
Let $A\subset X$ be a carrier for a non-trivial element of $\check{H}^{n}(X;G)$ and $f\colon X\to X$ a map homotopic to the identity on $X$. If $\dim_GX\leq n$, then $A\subset fA$.
\end{pro}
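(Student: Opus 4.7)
The plan is to reduce the claim to Lemma 2.2 applied with $B=fA$. That is, setting $C=A\cup fA$ and letting $j_A\colon A\hookrightarrow C$, $j_{fA}\colon fA\hookrightarrow C$ be the inclusions, it suffices to verify $\mathrm{Ker}(j_{fA}^*)\subset\mathrm{Ker}(j_A^*)$ in $\check{H}^n(C;G)$. Once this is established, Lemma 2.2 gives $A\subset fA$ directly.

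The two ingredients I would combine are homotopy invariance of $\check{H}^*$ and the hypothesis $\dim_GX\leq n$. For the first, let $g\colon A\to fA$ denote the restriction of $f$, so that on the level of maps $f\circ i_A=i_{fA}\circ g$, where $i_A,i_{fA}$ are the inclusions into $X$. Applying $\check{H}^n(\cdot;G)$ and using $f^*=\mathrm{id}^*$ (because $f\simeq\mathrm{id}_X$) yields the key factorization
\[
i_A^*=g^*\circ i_{fA}^*\colon\check{H}^n(X;G)\to\check{H}^n(A;G).
\]
For the second, I use that $\dim_GX\leq n$ implies $\check{H}^{n+1}(X,C;G)=0$, and the long exact sequence of the pair $(X,C)$ therefore makes the restriction homomorphism $k^*\colon\check{H}^n(X;G)\to\check{H}^n(C;G)$ (where $k\colon C\hookrightarrow X$) \emph{surjective}.

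Now I would finish as follows. Pick $\beta\in\mathrm{Ker}(j_{fA}^*)$ and lift it via $k^*$ to some $\gamma\in\check{H}^n(X;G)$, so $\beta=k^*(\gamma)$. Since $k\circ j_{fA}=i_{fA}$, we get $i_{fA}^*(\gamma)=j_{fA}^*(\beta)=0$; by the factorization above this forces $i_A^*(\gamma)=g^*(i_{fA}^*(\gamma))=0$, hence $j_A^*(\beta)=(k\circ j_A)^*(\gamma)=i_A^*(\gamma)=0$. Thus $\mathrm{Ker}(j_{fA}^*)\subset\mathrm{Ker}(j_A^*)$ and Lemma 2.2 yields $A\subset fA$.

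The conceptual heart of the argument is recognising that the hypothesis $\dim_GX\leq n$ is used \emph{only} to promote restrictions of $\check{H}^n$-classes from $X$ to $C$ to a surjection; all the rest is the clean homotopy-invariant diagram chase. I expect the main obstacle in writing this carefully to be justifying the surjectivity of $k^*$ from $\dim_GX\leq n$ (verifying that the definition of cohomological dimension in force in this paper indeed gives $\check{H}^{n+1}(X,C;G)=0$ for every closed $C\subset X$); everything else is essentially functoriality of $\check{H}^n$ combined with $f^*=\mathrm{id}$.
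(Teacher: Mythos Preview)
Your argument is correct and follows the same overall strategy as the paper's proof: both reduce to Lemma~2.2 by verifying $\mathrm{Ker}(j_{fA}^*)\subset\mathrm{Ker}(j_A^*)$, and both use $\dim_GX\le n$ precisely to lift classes from $\check H^n(A\cup fA;G)$ back to $\check H^n(X;G)$. The difference is in execution. The paper realises the lift via the identification $\check H^n(Y;G)\cong[Y,K(G,n)]$, extending a representing map $g\colon A\cup fA\to K(G,n)$ to $\tilde g\colon X\to K(G,n)$, and then constructs explicit homotopies (using a homotopy $F$ from $f$ to $\mathrm{id}_X$ and the null-homotopy of $g|fA$) to show $g|A$ is null-homotopic. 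You phrase the same content purely cohomologically: surjectivity of $k^*$ from the long exact sequence of $(X,C)$ replaces the map-extension step, and the one-line factorization $i_A^*=g^*\circ i_{fA}^*$ (from $f^*=\mathrm{id}$) replaces the concatenation of homotopies. Your version is more streamlined and avoids the $K(G,n)$ machinery; the paper's version is the same argument unpacked at the level of maps and homotopies. Your caveat about justifying $\check H^{n+1}(X,C;G)=0$ from $\dim_GX\le n$ is well placed but harmless here: for compact metric $X$ this surjectivity of restriction is one of the standard equivalent formulations of cohomological dimension, and it is exactly what the paper is invoking when it extends $g$ over $X$.
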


\begin{proof}
By \cite{hu}, we can identify the cohomological group $\check{H}^{n}(A\cup fA;G)$ with the group  $[A\cup fA, K(G,n)]$ of homotopy classes from $A\cup fA$ to $K(G,n)$, where $n>0$ and $K(G,n)$ denotes an Eilenberg-MacLane complex. Similarly,
$\check{H}^{n}(A;G)$ and $\check{H}^{n}(fA;G)$ are identified
with the groups $[A, K(G,n)]$ and $[fA, K(G,n)]$, respectively.

By Lemma 2.2, it suffices to prove that if $\alpha\in\mathrm{Ker}(j_{fA}^*)$ then $\alpha\in\mathrm{Ker}(j_{A}^*)$. So, we fix $\alpha\in\check{H}^{n}(A\cup fA;G)$ with $j_{fA}^*(\alpha)=0$. According to the above identifications, there exists a map $g\colon A\cup fA\to K(G,n)$ such that $\alpha=[g]$. Since $\dim_GX\leq n$, $g$ can be extended to a map $\tilde{g}\colon X\to K(G,n)$.
Because $j_{fA}^*(\alpha)=0$, we can find a homotopy $H_1\colon fA\times [0,1]\to K(G,n)$ with $H_1(x,0)=g(x)$ and $H_1(x,1)=*$ for all $x\in fA$, where $*$ is a point from $K(G,n)$. Then
the homotopy $H_2\colon A\times [0,1]\to K(G,n)$, $H_2(x,t)=H_1(f(x),t)$, connects the constant map $\kappa\colon A\hookrightarrow *$ and the map $h\colon A\to K(G,n)$ defined by $h(x)=g(f(x))$. Next, consider a homotopy $F\colon A\times [0,1]\to X$ with $F(x,0)=f(x)$ and $F(x,1)=x$, and define  $H\colon A\times [0,1]\to K(G,n)$ by $H(x,t)=\tilde{g}(F(x,t))$. We have $H(x,0)=g(f(x))$ and $H(x,1)=g(x)$ for all $x\in A$. Hence, $H$ is connecting the maps $h$ and the restriction $g|A$ of $g$ over $A$. Finally, combining $H$ and $H_2$, we can produce a homotopy on $A$ connecting the map $g|A$ and the constant map $\kappa$. Hence, $j_{A}^*(\alpha)=[g|A]=0$.
\end{proof}

Before proving the next property of carriers, we introduce some more notations. If $\omega$ is a finite open cover of  a closed
set $Z\subset X$, we denote by $|\omega|$ and $p_\omega$, respectively, the nerve of $\omega$ and a map from $Z$ onto $|\omega|$ generated by a
partition of unity subordinated to $\omega$.
Furthermore, if $C\subset Z$ and $\omega(C)=\{W\cap C: W\in\omega\}$, then $p_{\omega(C)}\colon C\to |\omega(C)|$ is the restriction
$p_\omega|C$.
Recall also that $p_\omega$ generates maps $p_\omega^*\colon\check{H}^k(|\omega|;G)\to\check{H}^k(Z;G)$ for $k\geq 0$. Moreover, if
$q_\omega\colon Z\to|\omega|$ is a map generating by (another) partition of unity subordinated to $\omega$, then $p_\omega$ and $q_\omega$
are homotopic. So, $p_\omega^*=q_\omega^*$.

\begin{pro} Let $K$ be a carrier for a non-trivial element of $\alpha\in\check{H}^{n}(X;G)$. Then for any two open disjoint subsets $U_1$ and $U_2$ of $K$ there exists an open cover $\omega$ of $K\setminus (U_1\cap U_2)$ and an element
$\eta\in\check{H}^{n-1}(|\omega|;G)$ such that $p_{\omega(C)}^*(i_{\omega(C)}^*(\eta))\neq 0$
for every partition $C$ of $K$ between $U_1$ and $U_2$, where $i_{\omega(C)}$ is the inclusion $|\omega(C)|\hookrightarrow|\omega|$.
\end{pro}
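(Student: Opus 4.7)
The plan is to construct a single ``universal'' class $\mathrm{e}\in\check{H}^{n-1}(K_0;G)$, where $K_0=K\setminus(U_1\cup U_2)$, which restricts non-trivially to every partition $C$ between $U_1$ and $U_2$, and then to pull it back to the nerve of a suitable open cover of $K_0$ via continuity of \v{C}ech cohomology. I may assume $U_1,U_2\neq\emptyset$, as the statement is otherwise vacuous.

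Set $\beta=i_K^*(\alpha)\in\check{H}^n(K;G)$. Since $K$ is a carrier of $\alpha$, $\beta\neq 0$ and the restriction of $\beta$ to any proper closed subset of $K$ vanishes. Applied to the decomposition $K=(K\setminus U_1)\cup(K\setminus U_2)$, whose intersection is $K_0$, the Mayer-Vietoris sequence
$$\check{H}^{n-1}(K_0;G)\xrightarrow{\triangle_0}\check{H}^{n}(K;G)\xrightarrow{\varphi_0}\check{H}^{n}(K\setminus U_1;G)\oplus\check{H}^{n}(K\setminus U_2;G)$$
therefore has $\varphi_0(\beta)=0$, so by exactness there is $\mathrm{e}\in\check{H}^{n-1}(K_0;G)$ with $\triangle_0(\mathrm{e})=\beta$. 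Continuity of \v{C}ech cohomology then supplies a finite open cover $\omega$ of $K_0$ and an element $\eta\in\check{H}^{n-1}(|\omega|;G)$ with $p_\omega^*(\eta)=\mathrm{e}$.

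Given a partition $C$ between $U_1$ and $U_2$, pick disjoint open sets $V_i\supset U_i$ in $K$ with $K\setminus C=V_1\cup V_2$ and set $K_i=V_i\cup C$. Then $K_1,K_2$ are proper closed subsets of $K$ with $K_1\cup K_2=K$, $K_1\cap K_2=C$, $K_1\subset K\setminus U_2$, and $K_2\subset K\setminus U_1$, so we have a map of triads $(K;K_1,K_2)\to(K;K\setminus U_2,K\setminus U_1)$. Naturality of the Mayer-Vietoris connecting homomorphism under this map yields $\triangle_C(i_{K_0,C}^*(\mathrm{e}))=\triangle_0(\mathrm{e})=\beta\neq 0$, which forces $i_{K_0,C}^*(\mathrm{e})\neq 0$. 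Combined with the standard compatibility $p_{\omega(C)}^*\circ i_{\omega(C)}^*=i_{K_0,C}^*\circ p_\omega^*$ (obtained by restricting a partition of unity for $\omega$ to $C$), this gives $p_{\omega(C)}^*(i_{\omega(C)}^*(\eta))=i_{K_0,C}^*(\mathrm{e})\neq 0$, as required.

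The main obstacle is the naturality of the Mayer-Vietoris connecting homomorphism in \v{C}ech cohomology with respect to the above map of triads, where the ambient space $K$ is unchanged while the two subspaces shrink. This is a standard consequence of the naturality of the long exact cohomology sequence of a pair together with excision, but since the whole ``universality'' of $\mathrm{e}$ relative to arbitrary partitions $C$ rests on it, I would spell out this naturality step explicitly rather than treat it as folklore.
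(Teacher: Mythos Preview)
Your proof is correct and follows essentially the same route as the paper: both arguments lift $\beta=i_K^*(\alpha)$ to a class in $\check{H}^{n-1}(K_0;G)$ via the Mayer-Vietoris sequence for $K=(K\setminus U_1)\cup(K\setminus U_2)$, realize it on a nerve by continuity of \v{C}ech cohomology, and then use the naturality of the Mayer-Vietoris connecting map with respect to the inclusion of triads $(K;F_1,F_2)\hookrightarrow(K;K\setminus U_2,K\setminus U_1)$ to conclude for each partition $C$. Your explicit mention of the compatibility $p_{\omega(C)}^*\circ i_{\omega(C)}^*=i_{K_0,C}^*\circ p_\omega^*$ and your caution about spelling out the naturality step are well placed; the paper leaves both implicit in its commutative diagram.
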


\begin{proof}
Let $K_1=K\setminus U_1$, $K_2=K\setminus U_2$, $C$ be a partition of $K$ between $U_1$ and $U_2$, and $F_1$, $F_2$ closed
subsets of $K$ such that: $F_1\cap F_2=C$, $F_1\cup F_2=K$, $F_1\subset K_1$ and $F_2\subset K_2$. Consider the commutative diagram whose rows
are Mayer-Vietoris sequences:
{ $$
\begin{CD}
\check{H}^{n-1}(K_1\cap K_2;G)@>{{\delta}}>>\check{H}^{n}(K;G)@>{{j}}>>\check{H}^{n}(K_1;G)\oplus\check{H}^{n}(K_2;G)\\
@ VV{i^*}V
@VV{id}V@VV{i_1^*\oplus i_2^*}V\\
\check{H}^{n-1}(C;G)@>{{\delta_1}}>>\check{H}^{n}(K;G)@>{{j_1}}>>\check{H}^{n}(F_1;G)\oplus\check{H}^{n}(F_2;G).
\end{CD}
$$}\\

Since $j(\beta)=0$, where $\beta=i^*_K(\alpha)$, there exists a non-zero element $\gamma\in\check{H}^{n-1}(K_1\cap K_2;G)$ with $\delta(\gamma)=\beta$. Consequently, we can find an open cover $\omega$ of $K_1\cap K_2$ and a non-trivial element $\eta\in\check{H}^{n-1}(|\omega|;G)$ such that $p_\omega^*(\eta)=\gamma$. It follows from the commutativity of the above diagram that $i^*(\gamma)\neq 0$. Then the equality $p_{\omega(C)}^*(i_{\omega(C)}^*(\eta))=i^*(\gamma)$ completes the proof.
\end{proof}

\begin{pro}
Every carrier for a non-trivial element of $\check{H}^{n}(X;G)$ is a strong $V^n_G$-continuum.
\end{pro}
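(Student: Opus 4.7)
The plan is to deduce the proposition directly from Proposition 2.4, which has been set up essentially for this purpose. Let $K$ be a carrier for a non-trivial $\alpha\in\check{H}^n(X;G)$ and fix disjoint open sets $U_1,U_2\subset K$; set $K_0:=K\setminus(U_1\cup U_2)$. To verify the strong $V^n_G$-continuum property I need to exhibit an open cover $\omega$ of $K_0$ and a class $\mathrm{e}\in\check{H}^{n-1}(K_0;G)$ such that for every partition $C$ of $K$ between $U_1$ and $U_2$ and every $\omega$-map $g\colon C\to Y$ one has $0\neq i_C^*(\mathrm{e})\in g^*\bigl(\check{H}^{n-1}(Y;G)\bigr)$.

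For $\omega$ I would take the cover produced by Proposition 2.4, and for the witness I would set $\mathrm{e}:=p_\omega^*(\eta)$, where $\eta\in\check{H}^{n-1}(|\omega|;G)$ is the class from that proposition. Choosing a single partition of unity on $K_0$ subordinated to $\omega$ and restricting it to $C$ defines both $p_\omega$ and $p_{\omega(C)}$ and makes the naturality square
\[
\begin{CD}
C @>{i_C}>> K_0 \\
@V{p_{\omega(C)}}VV @VV{p_\omega}V \\
|\omega(C)| @>{i_{\omega(C)}}>> |\omega|
\end{CD}
\]
commute. Hence $i_C^*(\mathrm{e})=p_{\omega(C)}^*\bigl(i_{\omega(C)}^*(\eta)\bigr)$, which is non-zero by Proposition 2.4.

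It remains to exhibit $i_C^*(\mathrm{e})$ as $g^*$ of some class for every $\omega(C)$-map $g\colon C\to Y$. Here I would invoke the standard factorization of $\omega$-maps through the nerve: pick an open cover $\mathcal V$ of $Y$ whose $g$-preimages refine $\omega(C)$, for each $V\in\mathcal V$ pick a vertex $W_V\in\omega$ with $g^{-1}(V)\subset W_V\cap C$, and let $q\colon Y\to|\omega(C)|$ be generated by a partition of unity subordinated to $\mathcal V$. Then $q\circ g\colon C\to|\omega(C)|$ is a canonical map of $C$ subordinated to $\omega(C)$, hence homotopic to $p_{\omega(C)}$ via the straight-line homotopy in $|\omega(C)|$. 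Therefore $p_{\omega(C)}^*=g^*\circ q^*$ and
\[
i_C^*(\mathrm{e})=g^*\bigl(q^*(i_{\omega(C)}^*(\eta))\bigr)\in g^*\bigl(\check{H}^{n-1}(Y;G)\bigr),
\]
as required.

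I do not foresee any serious obstacle; the proposition is essentially a repackaging of Proposition 2.4 together with the well-known factorization property of $\omega$-maps. The only small detail to be careful about is choosing the vertices $W_V\in\omega$ in the construction of $q$ so that the simplicial image of $q\circ g$ actually lies in the subcomplex $|\omega(C)|\subset|\omega|$, which is automatic from the identification $\omega(C)=\{W\cap C:W\in\omega\}$ together with the refinement condition on $\mathcal V$.
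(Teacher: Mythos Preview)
Your proposal is correct and follows essentially the same approach as the paper: both take the cover $\omega$ and class $\eta$ from Proposition~2.4, set $\mathrm{e}=p_\omega^*(\eta)$, use the naturality square to get $i_C^*(\mathrm{e})=p_{\omega(C)}^*(i_{\omega(C)}^*(\eta))\neq0$, and then invoke the standard factorization of $\omega$-maps through the nerve to place $i_C^*(\mathrm{e})$ in the image of $g^*$. The only cosmetic difference is that the paper routes the factorization explicitly through the auxiliary nerves $|\tau|$ (of a cover of $Y$) and $|\nu|=|g^{-1}(\tau)|$, using the simplicial isomorphism $|\tau|\to|\nu|$ and a refinement map $|\nu|\to|\omega(C)|$, whereas you collapse these two steps into a single map $q\colon Y\to|\omega(C)|$; the content is identical.
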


\begin{proof}
Indeed, suppose $U_1$ and $U_2$ are open subsets of $K$ having disjoint closures. Let $\omega$ be an open cover of $K_0=K\setminus (U_1\cup U_2)$ and $\mathrm{e}\in\check{H}^{n-1}(|\omega|;G)$ a non-trivial element satisfying the hypotheses of Proposition 2.4. Assume $C$ a partition  of $K$ between $U_1$ and $U_2$ admitting an $\omega$-map $g$ onto a space $T$. Thus, we can find a finite open cover $\tau$
of $T$  such that $\nu=g^{-1}(\tau)$ is refining $\omega$. Let $p_\nu\colon C\to|\nu|$ be a map onto the nerve of $\nu$ generated by a partition
of unity subordinated to $\nu$. Obviously, the function $V\in\tau\rightarrow g^{-1}(V)\in\nu$ provides a
a simplicial homeomorphism $g^{\tau}_\nu\colon|\tau|\to|\nu|$. Then the maps $p_\nu$ and
$g_\tau=g^{\tau}_\nu\circ\pi_\tau\circ g$, where $\pi_\tau\colon T\to|\tau|$ is a map generated by a partition of unity
subordinated to $|\tau|$,
are homotopic. Hence, $p_{\nu}^*=g^*\circ\pi_{\tau}^*\circ (g^{\tau}_\nu)^*$.

On the other hand, since $\nu$
refines $\omega$, we can find a map $\varphi_\nu\colon|\nu|\to |\omega(C)|$ such that $p _{\omega(C)}$ and $\varphi_\nu\circ p_{\nu}$
are homotopic. Therefore, $p_{\omega(C)}^*=p_{\nu}^*\circ\varphi_\nu^*$.
According to Proposition 2.4, there exists $\eta\in\check H^{n-1}(|\omega|;G)$ with $p_{\omega(C)}^*(i_{\omega(C)}^*(\eta))\neq 0$. Since
$i_C^*(p_\omega^*(\eta))=p_{\omega(C)}^*(i_{\omega(C)}^*(\eta))$, $\mathrm{e}=p_\omega^*(\eta)$ is a non-zero element of
$\check H^{n-1}(K_0;G)$. Here $p_\omega\colon K_0\to |\omega|$ is a map generated by a
partition of unity subordinated to $\omega$ and $i_C:C\hookrightarrow K_0$ is the inclusion map. Moreover, both equalities $p_{\nu}^*=g^*\circ\pi_{\tau}^*\circ (g^{\tau}_\nu)^*$ and $p_{\omega(C)}^*=p_{\nu}^*\circ\varphi_\nu^*$ yield that $i_C^*(\mathrm{e})$ is a non-trivial element of $g^*(\check H^{n-1}(T;G))$.
\end{proof}

\section{Proof of Theorem 1.1 and Corollary 1.2}

\textit{Proof of Theorem $1.1$.} Suppose $G$ is an abelian group, $X$ is a non-trivial homogeneous  metric $ANR$-continuum with $\dim_GX=n$ and $\check{H}^n(X;G)\neq 0$. Since $X$ is an $ANR$, $n\geq 1$ and there exists a positive  $\epsilon$ such that any two $\epsilon$-close maps from $X$ into $X$ are homotopic (we say that two maps $f_1, f_2\colon X\to X$ are $\epsilon$-close if $dist(f_1(x),f_2(x))<\epsilon$ for each $x\in X$).

It suffices to show that if $A$ is a carrier for a non-trivial element $\alpha\in\check{H}^n(X;G)$, then $A=X$. Indeed, suppose there exists a proper subset $B\subset X$ with $\check{H}^n(B;G)\neq 0$, and choose a non-trivial
element $\beta\in\check{H}^n(B;G)$.  Since $\dim_GX=n$, there exists $\alpha\in\check{H}^n(X;G)$ with $\beta=i_{B}(\alpha)$. Because the carrier of $\alpha$ is $X$ and $B$ is a proper subset of $X$, $i_{B}(\alpha)=0$, a contradiction.

Next, suppose $A\subset X$ is a carrier for a non-trivial $\alpha\in\check{H}^n(X;G)$ and $A$ is a proper set. According to the Effros'
theorem \cite{ef}, there corresponds a positive number $\delta$ with the following property: whenever $x$ and $y$ are points from $X$ and
$dist(x,y)<\delta$, there is a homeomorphism $h\colon X\to X$ such that $h(x)=y$ and $h$ is $\epsilon$-close to the identity $id_X$ on $X$. Because $A\neq X$, we can choose points $a\in A$ and $b\not\in A$ with $dist(a,b)<\delta$. Consequently, there would be a homeomorphism $f\colon X\to X$ such that $f(a)=b$ and $dist(f,id_X)<\epsilon$. Obviously, $g=f^{-1}\colon (X,fA)\to (X,A)$ generates the isomorphism
$g^*\colon\check{H}^n(X;G)\to\check{H}^n(X;G)$ and
$B=f(A)$ is a  carrier for the element $g^*(\alpha)$. Moreover, the homeomorphism $g$ is also $\epsilon$-close to $id_X$. Hence, $g$ is homotopic to $id_X$. Applying Proposition 2.3 to the carrier $B$ and the homeomorphism $g$, we obtain that $B\subset g(B)=A$ which contradicts $b\in B\setminus A$. Hence, $A$ should be the whole space $X$. \hfill$\square$

\textit{Proof of Corollary $1.2$.} Item (i) follows from Proposition 2.5. Item (ii) follows from the simple observation that
any $V^n_G$-continuum is an Alexandroff manifold with respect to the class $D^{n-2}_G$. Since $X$ is a carrier for every non-trivial
$\alpha\in\check{H}^n(X;G)$, item (iii) follows from Lemma 2.1. \hfill$\square$

\section{$V^n_G$-continua}

In this section we provide the proofs of Theorem 1.3, Corollary 1.4 and Propositions 1.5-1.7.

\textit{Proof of Theorem $1.3$.}
Since $\check H^{n}(X;G)\neq 0$, $X$ contains a carrier $A$ of a non-trivial element of $\check H^{n}(X;G)\neq 0$. Then, according to Propositions 2.5, $A$ is a strong $V^n_G$-continuum. \hfill$\square$

\textit{Proof of Corollary $1.4$.} This corollary follows directly from Theorem 1.3 because every compactum $X$ with $\dim_GX=n$ contains
a closed subset $F$ such that $\check{H}^{n-1}(F;G)\neq 0$ (see, for example, \cite{ku}). \hfill$\square$

\textit{Proof of Proposition $1.5$.} Suppose there exists a partition $C$ of $X$ such that for every open cover $\omega$ of $X$, $C$ admits an $\omega$-map $g_\omega\colon C\to Y_\omega$
onto a space of dimension $\dim_GY_\omega\leq n-1$ with $g_\omega^*:\check{H}^{n-1}(Y_\omega;G)\to\check{H}^{n-1}(C;G)$  being a trivial homomorphism. Then, according to \cite[Theorem 2.4]{cv}, $\dim_GC\leq n-1$. Obviously, the boundary $B$ of $C$ in $X$ is also a partition of $X$ and $\dim_GB\leq n-1$. Moreover, we have the commutative diagram below, where $g_\omega|B\colon B\to g_\omega(B)$ is the restriction of $g_\omega$
{ $$
\begin{CD}
\check{H}^{n-1}(Y_\omega;G)@>{{g^*_\omega}}>>\check{H}^{n-1}(C;G)\\
@VV{i^*_{g(B)}}V@VV{i_B^*}V\\
\check{H}^{n-1}(g_\omega(B);G)@>{{(g_\omega|B)^*}}>>\check{H}^{n-1}(B;G)@.
\end{CD}
$$}\\
Since $\dim_GY_\omega\leq n-1$, $i^*_{g(B)}$ is a surjection.  This implies that $(g_\omega|B)^*$ is the trivial homomorphism because so is $g_\omega^*$. Therefore, considering $B$ instead of $C$, we may assume that $C$ does not have interior points in $X$.
The above diagram also shows that for every closed subset $A\subset C$ and every $\omega$ the restriction $g_\omega|A$ is an $\omega$-map onto
$g_\omega(A)$ such that $(g_\omega|A)^*:\check{H}^{n-1}(g_\omega(A);G)\to\check{H}^{n-1}(A;G)$ is the trivial homomorphism.

By Theorem 1.3, there exists a strong $V^n_G$-continuum $K\subset X$.
Since $X$ is homogeneous, we may also assume that $K\cap C\neq\varnothing$. Observe that $z\in K\setminus C$ for some $z$.
Indeed, the inclusion $K\subset C$ would imply that if $P$ is a partition of $K$ and $\gamma$ any open cover of $K$, then $P$ admits an $\gamma$-map $h_\gamma$ onto a space $T$  such that $(h_\gamma)^*\colon\check{H}^{n-1}(T_\gamma;G)\to\check{H}^{n-1}(P;G)$ is trivial. This would contradict the fact that $K$ is a strong $V^n_G$-continuum. Let $X\setminus C=U\cup V$ and $z\in V$,
where $U$ and $V$ are nonempty, open and disjoint sets in $X$. Then the Effros theorem \cite{ef} allows us to push $K$ towards $U$ by a
small homeomorphism $h\colon X\to X$ so that the image $h(K)$ meets both $U$ and $V$ (see the proof of Lemma 2 from \cite{kru}
for a similar application of Effros' theorem). Therefore, $S=h(K)\cap C$ is a partition of $h(K)$ such that for any $\omega$ the restriction $g_\omega|S$ is an $\omega$-map generating a trivial homomorphism $(g_\omega|S)^*$, a contradiction. \hfill$\square$

\textit{Proof of Corollary $1.6$.} It follows directly from Proposition 1.5. \hfill$\square$

\smallskip
\textbf{Acknowledgments.} The author would like to express his
gratitude to K. Kawamura and K. Yokoi for providing some information. The author also thanks the referee for his/her valuable remarks and suggestions which improved the paper.


\end{document}